\documentclass[conference]{IEEEtran}
\IEEEoverridecommandlockouts

\usepackage{cite}
\usepackage{amsmath,amssymb,amsfonts,mathtools}
\usepackage{algorithmic}
\usepackage{graphicx}
\usepackage{textcomp}
\usepackage[dvipsnames]{xcolor}
\usepackage{tikz}
\usepackage{tikzscale}
\usepackage{tikz-3dplot}
\usepackage{pgf}
\usepackage{pgfplots}
\usepackage{pgfplotstable}
\usepackage{pdflscape}
\usepackage{subcaption}
\usepackage{cuted}   % for full-width equations inside two-column IEEE papers
\usepackage{url}

\usepackage{mathsemantics}
\usepackage[commentmarkup = footnote]{changes}
\usepackage{todonotes}
\usetikzlibrary{decorations.pathmorphing,arrows.meta}

\allowdisplaybreaks[2]
\usepackage{changepage}

\makeatletter
\def\widebreve{\mathpalette\wide@breve}
\def\wide@breve#1#2{\sbox\z@{$#1#2$}%
  \mathop{\vbox{\m@th\ialign{##\crcr
  \kern0.08em\brevefill#1{0.8\wd\z@}\crcr\noalign{\nointerlineskip}%
  $\hss#1#2\hss$\crcr}}}\limits}
\def\brevefill#1#2{$\m@th\sbox\tw@{$#1($}%
  \hss\resizebox{#2}{\wd\tw@}{\rotatebox[origin=c]{90}{\upshape(}}\hss$}
\makeatother

\newcommand{\R}{\mathbb{R}}
\renewcommand{\so}{\mathfrak{so}}
\newcommand{\se}{\mathfrak{se}}

\newcommand{\Exp}{\normalfont{\text{Exp}}}

\newcommand{\SO}{\normalfont{\text{SO}}}
\newcommand{\SE}{\normalfont{\text{SE}}}

\renewcommand{\riemannian}{\dual}

\newtheorem{theorem}{Theorem}[section]
\newtheorem{lemma}[theorem]{Lemma}
\newenvironment{proof}{\textit{Proof:}}{\hfill$\square$}

\newtheorem{ definition}[theorem]{Definition}
\newtheorem{remark}[theorem]{Remark}

\begin{document}

\title{Riemannian Metric Preconditioning for Trajectory Tracking\\
\thanks{J. Goodman was supported by the Marie Skłodowska-Curie grant agreement No. 101206748 (GNACS).}
}

\author{\IEEEauthorblockN{Jacob Goodman}
\IEEEauthorblockA{\textit{Department of Mathematical Sciences} \\
\textit{Norwegian University of Science and Technology}\\
Trondheim, Norway \\
jacob.goodman@ntnu.no}
\and
\IEEEauthorblockN{Hajg Jasa}
\IEEEauthorblockA{\textit{Department of Mathematical Sciences} \\
\textit{Norwegian University of Science and Technology}\\
Trondheim, Norway \\
hajg.jasa@ntnu.no}
}
\date{}
                       
\maketitle

\begin{abstract}
We introduce a rank-one Riemannian cometric update inducing a modification of the Riemannian metric that makes specific directions of motion cheaper to travel along.
We establish basic completeness properties of this reward metric, and we give an explicit characterization of its Levi--Civita connection.
We propose a preconditioned trajectory-tracking strategy by adding the connection-difference term to a standard intrinsic PD control, and illustrate the construction on a connection control-affine system on the Special Euclidean group with a maze navigation experiment.
When the nominal trajectory is an integral curve of the vector field used to define the reward metric, our methodology improves the overall tracking.
\end{abstract}

\begin{IEEEkeywords}
Geometric control, geodesics, Lie groups, nonlinear systems, control design, target tracking, control-affine systems, Riemannian metric, preconditioning
\end{IEEEkeywords}

\section{Introduction}

Trajectory generation and tracking for mechanical systems are naturally formulated on nonlinear configuration spaces rather than in Euclidean coordinates. This is especially true for systems evolving on manifolds and Lie groups, such as rigid bodies on \(\SE(3)\), where intrinsic formulations avoid singular parameterizations and preserve the underlying geometric structure. In geometric mechanics and control, the Riemannian metric, Levi--Civita connection, and geodesic flow play a central role in the modeling and analysis of simple mechanical systems, and related ideas have been used in intrinsic tracking and stabilization on Lie groups \cite{BulloLewis}.

A key observation is that the choice of Riemannian metric is not merely a modeling convenience: it determines the local notion of kinetic energy, the corresponding Levi--Civita connection, and hence the free second-order dynamics of the system. This has long been recognized in the controlled Lagrangians literature \cite{Bloch}, where closed-loop stabilization is achieved by reshaping the kinetic and potential structure of the underlying mechanical system, as well as through the use of artificial potential functions for navigation \cite{Rimon}. More recently, path-planning methods have emphasized that Riemannian metrics can also be used as design variables, for example, to avoid forbidden regions or encode directional costs \cite{Klein, Martin}. These developments suggest that metric design can serve not only as a modeling tool, but also as a mechanism for biasing closed-loop behavior in mechanical control problems.

In this paper, given a Riemannian metric $g$ and a one-form $\mu$, we consider the rank-one update
$$g_\mu = g - \frac{\mu \otimes \mu}{1+\|\mu^\sharp\|_g^2}.$$
Unlike the more familiar penalty metric $g+\mu\otimes\mu$, which inflates distances in the direction of $\mu^\sharp$, the metric \(g_\mu\) contracts them. Equivalently, motion tangent to \(\mu^\sharp\) becomes cheaper, while directions \(g\)-orthogonal to \(\mu^\sharp\) are unchanged. We therefore view \(g_\mu\) as a \emph{reward metric} that encodes preferred directions of travel.

Our motivation comes from trajectory tracking. Suppose a nominal trajectory \(q_d\) is given, together with a vector field \(X\) whose integral curves approximate the desired motion. Choosing \(\mu^\sharp=\lambda X\) with \(\lambda\ge 0\) yields a family of metrics \(g_\lambda\) for which motion tangent to \(X\) becomes cheaper as \(\lambda\) increases. This suggests a geometric preconditioning strategy: design feedback relative to the modified Levi--Civita connection, and then pull the resulting dynamics back to the original system through the difference tensor of the two connections. The outcome is an intrinsic correction term that augments a standard proportional-derivative tracking law. Our aim in this work is to establish the basic geometric properties of this metric modification strategy, derive the exact correction term appearing in the dynamics, and investigate how this correction affects trajectory tracking. The analysis and simulations show that the preconditioner is sensitive to the local behavior of the vector field used to deform the metric, and suggest the need for further investigations to streamline design and provide rigorous safety guarantees.

The main contributions are as follows. $1)$ We define the reward metrics $g_\mu$ as the duals of penalty cometrics. $2)$ We give a sufficient condition under which completeness of $g$ transfers to $g_\mu$. $3)$ We derive an explicit formula for the difference tensor between the Levi--Civita connections of $g$ and $g_\mu$. $4)$ We formulate a preconditioned intrinsic trajectory-tracking law for affine connection control systems on Riemannian manifolds. $5)$ We apply the construction to an underactuated rigid-body system on $\mathrm{SE}(3)$ and compare the resulting preconditioned controller with an unmodified geometric tracking controller in a maze navigation problem. The numerical experiments include both a reference trajectory that is an integral curve of the vector field defining the metric modification and a reference trajectory that is not, illustrating both the benefits and limitations of the method.

The paper is organized as follows. Section~II reviews the required geometric preliminaries. Section~III introduces the metric/cometric modification, proves the completeness result, and derives the associated Levi--Civita correction term. Section~IV formulates the preconditioned tracking strategy for affine connection control systems and specializes the notation to the $\mathrm{SE}(3)$ model used in the application. Section~V presents the numerical maze-navigation experiments, including tracking errors and control-effort comparisons for Euclidean and preconditioned feedback laws. Section~VI concludes with limitations and directions for future work.

\section{Preliminaries}

Let \(\cM\) be a finite-dimensional smooth manifold. A Riemannian metric \(g\) on \(\cM\) is a symmetric positive-definite \((0,2)\)-tensor field, which measures lengths and angles. It induces the musical morphisms \(\cdot^\flat \colon \tangentBundle \to \cotangentBundle\) and \(\cdot^\sharp \colon \cotangentBundle \to \tangentBundle\), characterized by
\[
X^\flat(Y)=g(X,Y),
\qquad
(X^\flat)^\sharp=X
\]
for all \(p\in\cM\) and \(X,Y\in \tangentSpace{p}\). This extends pointwise to vector fields and one-forms.

Equivalently, one may specify a symmetric positive-definite \((2,0)\)-tensor field \(g^*\), called a \emph{cometric}, related to \(g\) by
\[
g(X,Y)=g^*(X^\flat,Y^\flat).
\]
If \(g_{ij}\) is the coordinate matrix of \(g\), then the coordinate matrix of \(g^*\) is given by the inverse matrix \(g^{ij}\).

A choice of metric \(g\) determines geodesics, namely curves satisfying \(\nabla_{\dot\gamma}\dot\gamma=0\), where \(\nabla\) is the Levi--Civita connection of \((\cM,g)\). The (restricted) exponential map at \(p\in\cM\) is the map \(\exponential{p} \colon \tangentSpace{p}\to\cM\) defined by \(\exponential{p}(X_p)=\gamma(1)\), where \(\gamma\) is the geodesic starting at \(p\) with initial velocity \(X_p\). A \emph{retraction} at \(p\) is a first-order approximation \(\retract{p}\colon\tangentSpace{p}\to\cM\) of the exponential map satisfying
\[
\retract{p}(0_p)=p,
\qquad
\d_{0_p}\retract{p}=\id_{\tangentSpace{p}}.
\]
These conditions imply that \(\retract{p}\) is locally invertible near \(0_p\).
See, \eg, \cite[Chapter~4]{Absil} for more details. We shall also use a vector transport to compare tangent vectors based at nearby points. By a vector transport we mean a smooth assignment $\mathrm{VT}_{x\to y} \colon \tangentSpace{x} \to \tangentSpace{y}$,
defined for $x,y\in\mathcal M$ sufficiently close, such that
$\mathrm{VT}_{x\to x} = \operatorname{id}_{\tangentSpace{x}}$ and
$\mathrm{VT}_{x\to y}$ is linear on each tangent space. A natural choice of vector transport associated with a retraction is the differentiated retraction: if $y=\operatorname{retr}_x(\eta)$ and $\eta=\operatorname{retr}_x^{-1}(y)$, then we take
\[
    \mathrm{VT}^{\operatorname{retr}}_{x\to y}\xi
    \coloneq
    (\d\,\operatorname{retr}_x)_\eta[\xi]
    =
    \d_\eta\,\operatorname{retr}_x[\xi]
    ,
    \qquad
    \xi \in \tangentSpace{x},
\]
where we identify $\tangentSpace{\eta}[(\tangentSpace{x})]$ with $\tangentSpace{x}$. This
transport is not generally an isometry. When the retraction is chosen to
be the Riemannian exponential map, another canonical choice is
Levi--Civita parallel transport along the corresponding geodesic from
$x$ to $y$. In Euclidean space, both choices reduce to the identity
transport.

A \emph{Lie group} is a smooth manifold \(G\) equipped with a smooth group structure. Its Lie algebra is \(\mathfrak g \coloneq \tangentSpace{e}[G]\), where \(e\in G\) is the identity. The group exponential map \(\Exp:\mathfrak g\to G\) is defined by \(\Exp(X)=\gamma(1)\), where \(\gamma\) is the integral curve of \(X\in\mathfrak g\) starting at \(e\). In general, the Riemannian and group exponential maps do not coincide at \(e\).

\section{Metric Modification}

Let $g$ be a Riemannian metric on $\cM$ with cometric $g^\ast$, and \(\mu\in\Omega^1(\cM)\). We consider the penalty cometric
\[
g_\mu^*=g^*+\mu^\sharp\otimes\mu^\sharp.
\]
In coordinates,
\[
(g_\mu^*)^{ij}=g^{ij}+\mu^i\mu^j,
\qquad
\mu^i=g^{ij}\mu_j.
\]
By the Sherman--Morrison formula, the inverse metric has components
\[
(g_\mu)_{ij}
=
g_{ij}
-
\frac{g_{i\ell}\mu^\ell\mu^m g_{mj}}{1+\mu^r g_{rs}\mu^s}
=
g_{ij}
-
\frac{\mu_i\mu_j}{1+\mu_s\mu^s},
\]
and hence
\[
g_\mu
=
g-\frac{\mu\otimes\mu}{1+\|\mu^\sharp\|_g^2}.
\]
\begin{remark}
\label{rm:modification-via-vector}
Equivalently, for a vector field \(X\in\mathfrak X(\cM)\),
\begin{equation}
\label{eq:metric-modification-vector-field}
g_X
=
g-\frac{X^\flat\otimes X^\flat}{1+\|X\|_g^2},
\end{equation}
where \(X^\flat\) is taken with respect to the original metric \(g\).
\end{remark}

We refer to metrics of this form as \emph{reward metrics}, contrasting the more usual penalty metrics $g = g + \mu \otimes \mu$ which often appear in subriemannian geometry and nonholonomic mechanics, and, more recently, in obstacle avoidance strategies for control systems.

The modification \(g+\mu\otimes\mu\) inflates distances in the direction of \(\mu^\sharp\), while
\[
g_\mu(X,X)
=
\|X\|_g^2-\frac{\mu(X)^2}{1+\|\mu^\sharp\|_g^2}
=
\|X\|_g^2-\frac{g(\mu^\sharp,X)^2}{1+\|\mu^\sharp\|_g^2}
\le \|X\|_g^2.
\]
Thus \(\|X\|_{g_\mu}=\|X\|_g\) whenever \(g(\mu^\sharp,X)=0\), and otherwise \(\|X\|_{g_\mu}<\|X\|_g\). Hence, \(g_\mu\) appears to reward motion in the direction of \(\mu^\sharp\) relative to \(g\).

Before turning to control design, we establish a basic completeness property of \(g_\mu\). Recall that \(g\) is \emph{geodesically complete} if every unit-speed geodesic extends to all of \(\R\). 
By Hopf--Rinow, this is equivalent to completeness of the metric space \((\cM,d_g)\), where \(d_g\) is the Riemannian distance function associated to \(g\). The completeness of a Riemannian metric is preserved under inflations of distance, but not in general under contractions. Hence, \(g_\mu\) can be incomplete even when \(g\) is complete. The following theorem gives a sufficient condition preventing this by controlling the rate of contraction along sequences that escape to infinity, which is similar in spirit to Theorem~3.7.15 of \cite{AM}, which provides sufficient conditions for the completeness of mechanical systems.

\begin{theorem}
Let \(V_0\colon[0,\infty)\to(0,\infty)\) be continuous and nondecreasing such that
\[
\int_0^\infty \frac{dx}{\sqrt{V_0(x)}}=+\infty.
\]
If \(g\) is a complete Riemannian metric on \(\cM\), and \(\mu\in\Omega^1(\cM)\) satisfies
\[
1+\|\mu^\sharp(p)\|_g^2 \le V_0(d_g(p,x_0)),
\qquad
\forall p\in\cM,
\]
for some \(x_0\in\cM\), then
\[
g_\mu
=
g-\frac{\mu\otimes\mu}{1+\|\mu^\sharp\|_g^2}
\]
is a complete Riemannian metric on \(\cM\).
\end{theorem}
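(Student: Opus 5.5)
The plan is to compare $g_\mu$ with a conformal rescaling of $g$ and then use a standard escape-to-infinity argument, in the spirit of \cite[Theorem~3.7.15]{AM}. First, a pointwise lower bound. Write $W \coloneq 1+\|\mu^\sharp\|_g^2$. By Cauchy--Schwarz, $\mu(X)^2 = g(\mu^\sharp,X)^2 \le \|\mu^\sharp\|_g^2\,\|X\|_g^2$, so
\[
g_\mu(X,X) \ge \|X\|_g^2\Bigl(1-\frac{\|\mu^\sharp\|_g^2}{1+\|\mu^\sharp\|_g^2}\Bigr) = \frac{\|X\|_g^2}{W(p)} \ge \frac{\|X\|_g^2}{V_0(d_g(p,x_0))}
\]
for all $X\in\tangentSpace{p}$. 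In particular $g_\mu$ is positive definite, so it is indeed a Riemannian metric; this also follows from its being the inverse of the positive-definite cometric $g_\mu^*$.

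Next, to prove completeness I will use Hopf--Rinow and show that $(\cM,d_{g_\mu})$ is complete. It suffices to show that $d_{g_\mu}$-bounded sets are $d_g$-bounded. Completeness of $g$ then makes closed bounded sets compact, and a $d_{g_\mu}$-Cauchy sequence lies in such a compact set. Since the two metrics induce the same manifold topology, that sequence has a convergent subsequence, hence converges. To get the boundedness, define $F(r)\coloneq\int_0^r dx/\sqrt{V_0(x)}$. It is strictly increasing and $F(r)\to\infty$ by hypothesis. Let $\rho(p)\coloneq d_g(p,x_0)$, which is $1$-Lipschitz for $d_g$. Take a piecewise $C^1$ curve $\gamma\colon[0,1]\to\cM$ from $x_0$ to $p$. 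Then $\rho\circ\gamma$ is Lipschitz with $|(\rho\circ\gamma)'|\le\|\dot\gamma\|_g$ almost everywhere. Combining this with the pointwise bound gives
\[
L_{g_\mu}(\gamma) \ge \int_0^1 \frac{\|\dot\gamma(t)\|_g}{\sqrt{V_0(\rho(\gamma(t)))}}\,dt \ge \int_0^1 \frac{|(\rho\circ\gamma)'(t)|}{\sqrt{V_0(\rho(\gamma(t)))}}\,dt \ge F(\rho(p)) - F(0).
\]
The final inequality is the change of variables for the absolutely continuous function $F\circ\rho\circ\gamma$, whose derivative has absolute value $|(\rho\circ\gamma)'|/\sqrt{V_0(\rho\circ\gamma)}$ almost everywhere. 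Taking the infimum over curves yields $d_{g_\mu}(x_0,p)\ge F(d_g(p,x_0))$. Hence a bound $d_{g_\mu}(x_0,p)\le R$ forces $d_g(p,x_0)\le F^{-1}(R)<\infty$, because $F$ is unbounded.

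The main obstacle is the regularity in the chain-rule step, since $\rho$ is only Lipschitz and not smooth. I will handle it by working with absolutely continuous functions as above. Alternatively, I could avoid derivatives altogether by partitioning $[0,1]$ finely and using that $V_0$ is monotone and continuous, which amounts to a Riemann-sum version of the same estimate.

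As a cross-check, there is a more geometric route. The conformal metric $\tilde g = g/V_0(\rho)$ is complete by the same integral estimate. Since $g_\mu\ge\tilde g$, we get $d_{g_\mu}\ge d_{\tilde g}$, and completeness passes to $g_\mu$ because inflating distances preserves completeness, as noted before the theorem. The trouble is that $\tilde g$ is only continuous, which is why I prefer the direct length estimate.
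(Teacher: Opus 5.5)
Your proof is correct, but after the shared first step it takes a different route from the paper.

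Both arguments start from the pointwise bound $\|X\|_{g_\mu}^2\ge \|X\|_g^2/(1+\|\mu^\sharp\|_g^2)$. The paper then compares $g_\mu$ with the conformal metric $fg$, where $f=1/(1+\|\mu^\sharp\|_g^2)$. This $f$ is smooth, so the regularity problem you raise in your ``cross-check'' does not arise: $V_0(\rho)$ appears only as a lower bound inside the length integral. Your cross-check is therefore essentially the paper's route.

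The paper shows $fg$ is geodesically complete by contradiction. A maximal unit-speed $fg$-geodesic on $[0,T)$ with $T<\infty$ escapes every compact set, so $\rho\to\infty$ along it. The paper cuts the geodesic at the first hitting times $t_n$ of $\{\rho=n\}$. It uses only that $\rho$ is $1$-Lipschitz, together with monotonicity of $V_0$, to obtain $T\ge\sum_n 1/\sqrt{V_0(n+1)}\ge\int_{N+1}^\infty dx/\sqrt{V_0(x)}=\infty$.

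You instead prove metric completeness directly, via the quantitative estimate $d_{g_\mu}(x_0,p)\ge F(d_g(x_0,p))$. This is the continuous analogue of the paper's shell sum, with the a.e.\ chain rule for the Lipschitz function $\rho\circ\gamma$ replacing the discretization. Your version needs neither the escape lemma for maximal geodesics nor an auxiliary metric, and it gives an explicit distance comparison.

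Your main argument never uses that $V_0$ is nondecreasing. Only continuity, positivity and divergence of the integral enter, so you actually prove a slightly stronger statement. The paper's discrete argument gives this up in exchange for avoiding any measure-theoretic differentiation.
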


\begin{proof}
For any \(p\in\cM\) and \(X\in \tangentSpace{p}\),
\[
\|X\|_{g_\mu}^2
=
\|X\|_g^2-\frac{g(\mu^\sharp,X)^2}{1+\|\mu^\sharp\|_g^2}
\ge
\frac{\|X\|_g^2}{1+\|\mu^\sharp\|_g^2}.
\]
Thus \(g_\mu\) dominates the conformal metric \(fg\), where
\[
f(p)\coloneq\frac{1}{1+\|\mu^\sharp(p)\|_g^2}.
\]
It therefore suffices to show that \(fg\) is complete.

Suppose toward a contradiction that \(fg\) is geodesically incomplete, and let \(\gamma \colon [0,T)\to\cM\) be a maximal unit-speed geodesic of \(fg\). Then \(\gamma\) eventually leaves every compact subset of \(\cM\). Let \(\rho \coloneq d_g(\cdot,x_0)\). Since \(g\) is complete, closed bounded sets are compact, so \(\rho(\gamma(t))\to\infty\) as \(t\to T^-\). Moreover,
\begin{align*}
T
&=
L_{fg}[\gamma]
=
\int_0^T \|\dot\gamma(t)\|_{fg}\,dt
=
\int_0^T \|\dot\gamma(t)\|_g\sqrt{f(\gamma(t))}\,dt \\
&\ge
\int_0^T \frac{\|\dot\gamma(t)\|_g}{\sqrt{V_0(\rho(\gamma(t)))}}\,dt.
\end{align*}

Let \(N\coloneq\lceil \rho(\gamma(0))\rceil\), and for each integer \(n>N\) define
\[
t_n\coloneq\inf\{t\in[0,T)\mid \rho(\gamma(t))=n\}.
\]
Then \(t_n\to T\), and \(\rho(\gamma(t))\le n+1\) for all \(t\in[t_n,t_{n+1}]\). Since \(V_0\) is nondecreasing,
\[
\frac{1}{\sqrt{V_0(\rho(\gamma(t)))}} \ge \frac{1}{\sqrt{V_0(n+1)}}
\qquad
\text{for } t\in[t_n,t_{n+1}].
\]
Hence,
\begin{align*}
T
&\ge
\sum_{n=N}^\infty
\int_{t_n}^{t_{n+1}}
\frac{\|\dot\gamma(t)\|_g}{\sqrt{V_0(\rho(\gamma(t)))}}\,dt \\
&\ge
\sum_{n=N}^\infty
\frac{1}{\sqrt{V_0(n+1)}}
\int_{t_n}^{t_{n+1}} \|\dot\gamma(t)\|_g\,dt \\
&\ge
\sum_{n=N}^\infty
\frac{d_g(\gamma(t_n),\gamma(t_{n+1}))}{\sqrt{V_0(n+1)}}
\ge
\sum_{n=N}^\infty \frac{1}{\sqrt{V_0(n+1)}} \\
&\ge
\int_{N+1}^\infty \frac{dx}{\sqrt{V_0(x)}},
\end{align*}
contradicting the divergence of the final integral.
\end{proof}

The theorem shows that \(g_\mu\) is complete whenever \(g\) is complete and \(\|\mu^\sharp(x)\|_g\) does not increase too fast as \(x\to\infty\). In particular, it suffices that \(\|\mu^\sharp\|_g=O(d_g(x_0,\cdot))\).

\subsection{Levi--Civita Connection}
We now turn our attention to the Levi--Civita connection of the new reward metric, and give an explicit characterization for it in terms of the original metric.
\begin{lemma}
\label{lemma:new-levi-civita}
Let $(\cM, g)$ be a Riemannian manifold, and let
\[
g_\mu = g - \frac{\mu \otimes \mu}{1 + \riemanniannorm{\mu^\sharp}^2}
\]
for some $\mu \in \Omega^1(\cM) \setminus \{0\}$.
Denote the Levi--Civita connection with respect to $g$ and $g_\mu$ by $\nabla$ and $\tilde{\nabla}$, respectively.
Set
\[
\Lambda \coloneqq 1 + \riemanniannorm{\mu^\sharp}^2,
\quad
\mu^\sharp_\perp
\coloneqq
\frac{\mu(X)\mu(Y)}{\Lambda}\,\mu^\sharp
-
\frac{\mu(Y)}{2}\,X
-
\frac{\mu(X)}{2}\,Y.
\]
Then, for all $X, Y \in \Gamma(\tangentBundle)$,
\begingroup\small
\begin{equation}
\begin{aligned}
\tilde{\nabla}_X Y
={}&
\nabla_X Y
\\
&
+
\left[
  \frac{1}{\Lambda}
  \riemannian{
    \mu(Y)\nabla_X \mu^\sharp + \mu(X)\nabla_Y \mu^\sharp
  }{\mu^\sharp}
  -
  \riemannian{\nabla_X \mu^\sharp}{Y}
\right]\mu^\sharp
\\
&
\hspace{-2.5em}
+
\left[
  \frac{1}{2}\,\d\mu(X,Y)
  -
  \frac{\mu(X)\mu(Y)}{\Lambda}
  \frac{\mu(\nabla_{\mu^\sharp}\mu^\sharp)}{\riemanniannorm{\mu^\sharp}^2}
  +
  \frac{\Lambda\,\d\mu(\mu^\sharp_\perp,\mu^\sharp)}{\riemanniannorm{\mu^\sharp}^2}
\right]\mu^\sharp
\\
&-
\frac{\mu(X)\mu(Y)}{\Lambda^2}
\left[
  \nabla_{\mu^\sharp}\mu^\sharp
  -
  \frac{\mu(\nabla_{\mu^\sharp}\mu^\sharp)}{\riemanniannorm{\mu^\sharp}^2}\,\mu^\sharp
\right]
\\
&+
\frac{1}{\Lambda}\,\d\mu(\mu^\sharp_\perp,\cdot)^\sharp
-
\frac{\d\mu(\mu^\sharp_\perp,\mu^\sharp)}{\riemanniannorm{\mu^\sharp}^2}\,\mu^\sharp .
\end{aligned}
\label{eq:levi-civita-connection}
\end{equation}
\endgroup
where $g = \riemannian{\cdot}{\cdot}$ and $\riemanniannorm{\cdot}$ is the norm induced by $g$.
\end{lemma}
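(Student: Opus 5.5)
The plan is to work with the difference tensor $D(X,Y)\coloneqq\tilde\nabla_XY-\nabla_XY$. It is a symmetric $(1,2)$-tensor, because both connections are torsion-free. It is determined by the standard identity obtained from the Koszul formula for two metrics $g$ and $\tilde g = g_\mu$:
\[
2\,\tilde g(D(X,Y),Z) = (\nabla_X\tilde g)(Y,Z) + (\nabla_Y\tilde g)(X,Z) - (\nabla_Z\tilde g)(X,Y).
\]
Since $\nabla g = 0$, only the rank-one part $h\coloneqq -\mu\otimes\mu/\Lambda$ contributes. Write $\nu\coloneqq\mu^\sharp$, so that $\mu = \nu^\flat$, $\Lambda = 1+\|\nu\|^2$, and $Z\Lambda = 2\langle\nabla_Z\nu,\nu\rangle$. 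Then
\[
(\nabla_Z h)(X,Y) = -\frac{(\nabla_Z\mu)(X)\,\mu(Y) + \mu(X)\,(\nabla_Z\mu)(Y)}{\Lambda} + \frac{2\langle\nabla_Z\nu,\nu\rangle}{\Lambda^2}\,\mu(X)\mu(Y),
\]
with $(\nabla_Z\mu)(X) = \langle\nabla_Z\nu, X\rangle$.

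To turn the symmetric combination into the form appearing in the statement, I will use
\[
(\nabla_X\mu)(Y) - (\nabla_Y\mu)(X) = \d\mu(X,Y)
\]
(torsion-freeness) to trade each antisymmetrized pair of $\nabla\mu$ terms for a $\d\mu$ term. This leaves a one-form $\alpha(Z)$ such that $2\,\tilde g(D(X,Y),\cdot) = \alpha$. Collecting terms, $\alpha$ splits into pieces proportional to $\mu$, pieces of the form $\d\mu(\,\cdot\,,Z)$ evaluated at combinations of $X$, $Y$, $\nu$ (which is where $\mu^\sharp_\perp$ enters), and the term $\frac{\mu(X)\mu(Y)}{\Lambda^2}\langle\nabla_Z\nu,\nu\rangle$. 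The last one I rewrite via $\langle\nabla_Z\nu,\nu\rangle = \langle\nabla_\nu\nu, Z\rangle + \d\mu(Z,\nu)$, so that it yields the $\nabla_{\mu^\sharp}\mu^\sharp$ term.

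Next I invert $\tilde g$. The musical map $\tilde\flat$ of $g_\mu$ has inverse given by the cometric $g_\mu^* = g^* + \nu\otimes\nu$, which is the defining Sherman--Morrison relation from the construction of $g_\mu$. So for any one-form $\beta$,
\[
\beta^{\tilde\sharp} = \beta^\sharp + \beta(\nu)\,\nu.
\]
This explains why every correction in \eqref{eq:levi-civita-connection} is either a $g$-sharp of a one-form or a multiple of $\mu^\sharp$. I then compute $D(X,Y) = \tfrac12\alpha^\sharp + \tfrac12\alpha(\nu)\,\nu$ term by term:
\begin{itemize}
\item[(i)] the $\mu$-proportional parts of $\alpha$ give multiples of $\nu$, with the coefficient scaled by $1+\|\nu\|^2 = \Lambda$, which cancels a $1/\Lambda$;
\item[(ii)] the $\d\mu(\mu^\sharp_\perp,\cdot)$ part gives $\frac1\Lambda\d\mu(\mu^\sharp_\perp,\cdot)^\sharp$ plus a $\nu$-multiple involving $\d\mu(\mu^\sharp_\perp,\nu)$;
\item[(iii)] the $\nabla_\nu\nu$ part gives $\nabla_\nu\nu$ plus a $\nu$-multiple.
\end{itemize}

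Finally, I regroup each vector-valued piece $W$ into its $g$-orthogonal projection off $\nu$, namely $W - \frac{\langle W,\nu\rangle}{\|\nu\|^2}\nu$, plus a $\nu$-component. This is where the divisions by $\|\mu^\sharp\|^2$ come from, and it is why the hypothesis $\mu\neq0$ is needed (at least pointwise where the formula is used). The bracketed expressions in the lemma are exactly these projections.

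The main obstacle is bookkeeping. The statement's form is not canonical: the same tensor admits many rearrangements. So rather than deriving it directly, I would first obtain a clean formula
\[
D(X,Y) = \tfrac12\alpha^\sharp + \tfrac12\alpha(\nu)\,\nu,
\]
and then verify the lemma by checking two things separately: that the $\nu$-components agree, by pairing both sides with $\nu$, and that the components $g$-orthogonal to $\nu$ agree. The $\nu$-pairing is the delicate step, since $\d\mu(\nu,\nu)=0$ and the identity $\mu(\nabla_\nu\nu) = \langle\nabla_\nu\nu,\nu\rangle$ must be used to cancel terms.

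As a sanity check, I will confirm symmetry of the result in $X$ and $Y$ (note that $\mu^\sharp_\perp$ is symmetric), and confirm the special case $\d\mu=0$, $\nabla\nu=0$, where the correction $D$ must vanish.
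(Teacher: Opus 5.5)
Your machinery is right, and it is essentially the paper's own route. The identity $2\tilde g(D(X,Y),Z)=(\nabla_X\tilde g)(Y,Z)+(\nabla_Y\tilde g)(X,Z)-(\nabla_Z\tilde g)(X,Y)$ packages the Koszul computation the paper does by hand. Your $\tfrac12\alpha^\sharp$ is exactly the paper's penultimate expression for $T_XY-\frac{\mu(T_XY)}{\Lambda}\mu^\sharp$. Your inversion $\beta^{\tilde\sharp}=\beta^\sharp+\beta(\nu)\nu$ is the paper's parallel/orthogonal decomposition: from $\mu(T)=\Lambda\mu(W)$ one gets $T=W+\mu(W)\mu^\sharp$. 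Items (i)--(iii) are also correct.

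What fails is your final claim that the lemma's brackets are exactly your projections; the $\nu$-pairing check will not close. Item (ii) contributes
$\frac1\Lambda\mathrm{d}\mu(\mu^\sharp_\perp,\cdot)^\sharp+\frac1\Lambda\mathrm{d}\mu(\mu^\sharp_\perp,\mu^\sharp)\,\mu^\sharp$.
The displayed formula instead has $\frac1\Lambda\mathrm{d}\mu(\mu^\sharp_\perp,\cdot)^\sharp$ plus the $\mu^\sharp$-multiples $\frac{\Lambda}{\|\mu^\sharp\|^2}$ and $-\frac{1}{\|\mu^\sharp\|^2}$ of $\mathrm{d}\mu(\mu^\sharp_\perp,\mu^\sharp)$. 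These sum to $\frac{\Lambda-1}{\|\mu^\sharp\|^2}\mathrm{d}\mu(\mu^\sharp_\perp,\mu^\sharp)=\mathrm{d}\mu(\mu^\sharp_\perp,\mu^\sharp)$. Every other term matches.

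The mismatch $\frac{\|\mu^\sharp\|^2}{\Lambda}\mathrm{d}\mu(\mu^\sharp_\perp,\mu^\sharp)\,\mu^\sharp$ is genuinely nonzero. On Euclidean $\mathbb{R}^2$ take $\mu=x\,\mathrm{d}y$. Then $g_\mu=\mathrm{d}x^2+(1+x^2)^{-1}\mathrm{d}y^2$, so $\tilde\nabla_{\partial_x}\partial_y=-\frac{x}{1+x^2}\partial_y$. At $x=1$ with $X=\partial_x$, $Y=\partial_y$, one has $\mu^\sharp_\perp=-\frac12\partial_x$ and $\mathrm{d}\mu(\mu^\sharp_\perp,\mu^\sharp)=-\frac12$. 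The displayed formula returns $-\frac34\partial_y$ instead of $-\frac12\partial_y$, while your unregrouped formula gives the right value. So the statement as printed cannot be verified. The paper makes the same slip in its ``straightforward'' last step, right after a correct penultimate display.

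Your proposed sanity checks would not have exposed this. The discrepancy is symmetric in $X,Y$ and vanishes when $\mathrm{d}\mu=0$, so a meaningful test needs $\iota_{\mu^\sharp}\mathrm{d}\mu\neq0$, as in the example above. What your argument actually proves is
\begin{align*}
\tilde\nabla_XY={}&\nabla_XY+\Bigl[\tfrac{1}{\Lambda}\langle \mu(Y)\nabla_X\mu^\sharp+\mu(X)\nabla_Y\mu^\sharp,\mu^\sharp\rangle\\
&{}-\langle\nabla_X\mu^\sharp,Y\rangle+\tfrac12\,\mathrm{d}\mu(X,Y)\Bigr]\mu^\sharp\\
&{}-\tfrac{\mu(X)\mu(Y)}{\Lambda^2}\bigl(\nabla_{\mu^\sharp}\mu^\sharp+\mu(\nabla_{\mu^\sharp}\mu^\sharp)\,\mu^\sharp\bigr)\\
&{}+\tfrac{1}{\Lambda}\bigl(\mathrm{d}\mu(\mu^\sharp_\perp,\cdot)^\sharp+\mathrm{d}\mu(\mu^\sharp_\perp,\mu^\sharp)\,\mu^\sharp\bigr),
\end{align*}
which needs no division by $\|\mu^\sharp\|^2$.

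To recover the projected form, treat the $\mathrm{d}\mu$ term exactly as the paper treats the $\nabla_{\mu^\sharp}\mu^\sharp$ term. The second bracket should carry $+\frac{\mathrm{d}\mu(\mu^\sharp_\perp,\mu^\sharp)}{\|\mu^\sharp\|^2}$, without the factor $\Lambda$. The last line should read $\frac1\Lambda\bigl[\mathrm{d}\mu(\mu^\sharp_\perp,\cdot)^\sharp-\frac{\mathrm{d}\mu(\mu^\sharp_\perp,\mu^\sharp)}{\|\mu^\sharp\|^2}\mu^\sharp\bigr]$.
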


\begin{proof}
Set $\Lambda \coloneqq 1 + \riemanniannorm{\mu^\sharp}^2$ for brevity. From the Koszul formula, we have
\begin{align*}
2g(\nabla_X Y, Z)
&=
X(g(Y,Z)) + Y(g(X,Z)) - Z(g(X,Y)) \\
&\quad
+ g([X, Y], Z) - g([X,Z], Y) - g([Y,Z], X).
\end{align*}
Applying the Koszul formula to $g_\mu$, and writing $\tilde{\nabla}_X Y = \nabla_X Y + T_X Y$ for some tensor $T$ to be determined, we find that
\begingroup\small
\begin{multline*}
2g(T_XY, Z)
-
\frac{2}{\Lambda}\,\mu(\nabla_X Y)\mu(Z)
-
\frac{2}{\Lambda}\,\mu(T_XY)\mu(Z)
=
\\
-
X\!\left(\frac{\mu(Y)\mu(Z)}{\Lambda}\right)
-
Y\!\left(\frac{\mu(Z)\mu(X)}{\Lambda}\right)
+
Z\!\left(\frac{\mu(X)\mu(Y)}{\Lambda}\right)
\\
+
\frac{1}{\Lambda}
\left[
-\mu([X,Y])\mu(Z)
+\mu([X,Z])\mu(Y)
+\mu([Y,Z])\mu(X)
\right].
\end{multline*}
\endgroup
We expand the right-hand side. Using the compatibility of the connection with the metric and the fact that $\mu(Y) = \riemannian{\mu^\sharp}{Y}$ for any $Y \in \Gamma(\tangentBundle)$, we obtain
\begingroup\small
\begin{align*}
X\!\left(\frac{\mu(Y)\mu(Z)}{\Lambda}\right)
&=
  \frac{\mu(Y)}{\Lambda}
  \bigl(
    \riemannian{\nabla_X \mu^\sharp}{Z} 
    + 
    \mu(\nabla_X Z)
  \bigr)
  \\
  &
  \quad
  +
  \frac{\mu(Z)}{\Lambda}
  \bigl(
    \riemannian{\nabla_X \mu^\sharp}{Y} 
    + 
    \mu(\nabla_X Y)
  \bigr)
  \\
  &\quad
  -
  \frac{2\mu(Y)\mu(X)}{\Lambda^2}\,\mu(\nabla_X \mu^\sharp),
\end{align*}
\endgroup
and similar relations for the terms involving $Y$ and $Z$. 
Using the fact that the connection $\nabla$ is torsion-free, \ie\ $[X, Y] = \nabla_X Y - \nabla_Y X$, and the previous relations in the Koszul formula, after some cancellations, we get
\begingroup\small
\begin{multline*}
  \hspace{-1em}
  2g(T_XY, Z)
  -
  \frac{2}{\Lambda}\,\mu(T_XY)\mu(Z)
  =
  \frac{\mu(Y)}{\Lambda}
  \bigl(
    \riemannian{\nabla_Z \mu^\sharp}{X}
    -
    \riemannian{\nabla_X \mu^\sharp}{Z}
  \bigr)
  \\
  \hspace{-0.5em}
  -
  \frac{\mu(Z)}{\Lambda}
  \bigl(
    \riemannian{\nabla_X \mu^\sharp}{Y}
    +
    \riemannian{\nabla_Y \mu^\sharp}{X}
  \bigr)
  +
  \frac{\mu(X)}{\Lambda}
  \bigl(
    \riemannian{\nabla_Z \mu^\sharp}{Y}
    -
    \riemannian{\nabla_Y \mu^\sharp}{Z}
  \bigr)
  \\
  \hspace{-0.5em}
  +
  \frac{2}{\Lambda}
  \Bigl[
    \mu(Y)\mu(Z)\mu(\nabla_X \mu^\sharp)
    +
    \mu(Z)\mu(X)\mu(\nabla_Y \mu^\sharp)
    -
    \mu(X)\mu(Y)\mu(\nabla_Z \mu^\sharp)
  \Bigr].
\end{multline*}
\endgroup
The goal now is to isolate a product of the form $\riemannian{\ast}{Z}$ on both sides of the previous equation. 
Using the $\cdot^\sharp$ isomorphism again and rearranging the terms yields
\begingroup\small
\begin{multline*}
  \hspace{-1em}
  2 \riemannian[auto]{
    T_XY - \frac{\mu(T_XY)}{\Lambda}\,\mu^\sharp
  }{Z}
  =
  \frac{1}{\Lambda}
  \riemannian[auto]{
    -\mu(Y)\nabla_X \mu^\sharp
    -\mu(X)\nabla_Y \mu^\sharp
  }{Z}
  \\
  -
  \frac{1}{\Lambda}
  \riemannian[auto]{ 
    \bigl[
      \riemannian{\nabla_X \mu^\sharp}{Y}
      +
      \riemannian{\nabla_Y \mu^\sharp}{X}
    \bigr]\mu^\sharp
  }{Z}
  \\
  +
  \frac{2}{\Lambda^2}
  \riemannian[auto]{
    \bigl[
      \mu(Y)\mu(\nabla_X \mu^\sharp)
      +
      \mu(X)\mu(\nabla_Y \mu^\sharp)
    \bigr]\mu^\sharp
  }{Z}
  \\
  +
  \frac{1}{\Lambda}
  \left[
    \mu(Y)\riemannian{\nabla_Z \mu^\sharp}{X}
    +
    \mu(X)\riemannian{\nabla_Z \mu^\sharp}{Y}
  \right]
  \\
  -
  \frac{2\mu(X)\mu(Y)}{\Lambda^2}\,
  \riemannian{\nabla_Z \mu^\sharp}{\mu^\sharp}
  .
\end{multline*}
\endgroup
We wish to manipulate the terms involving the covariant derivatives $\nabla_Z \mu^\sharp$. To this end, from metric compatibility, we see that
\begin{align*}
  \riemannian{\nabla_Z \mu^\sharp}{X}
  &
  =
  \riemannian{\nabla_X \mu^\sharp}{Z}
  +
  Z(\mu(X))
  -
  X(\mu(Z))
  +
  \mu([X, Z])
  \\
  &
  =
  \riemannian{\nabla_X \mu^\sharp}{Z}
  -
  \d\mu(X, Z),
\end{align*}
where the last equality is a consequence of Cartan's magic formula. Analogous relations hold for the other terms. 
Using these in the previous expression, we find
\begingroup\small
\begin{multline*}
  \hspace{-2em}
  2 \riemannian[auto]{
    T_XY - \frac{\mu(T_XY)}{\Lambda}\,\mu^\sharp
  }{Z}
  =
  \frac{1}{\Lambda}
  \riemannian[auto]{
    -\mu(Y)\nabla_X \mu^\sharp
    -\mu(X)\nabla_Y \mu^\sharp
  }{Z}
  \\
  -
  \frac{1}{\Lambda}
  \riemannian{
    \bigl[
      \riemannian{\nabla_X \mu^\sharp}{Y}
      +
      \riemannian{\nabla_Y \mu^\sharp}{X}
    \bigr]\mu^\sharp
  }{Z}
  \\
  +
  \frac{2}{\Lambda^2}
  \riemannian[auto]{
    \bigl[
      \mu(Y)\mu(\nabla_X \mu^\sharp)
      +
      \mu(X)\mu(\nabla_Y \mu^\sharp)
    \bigr]\mu^\sharp
  }{Z}
  \\
  \hspace{-2em}
  +
  \frac{1}{\Lambda}
  \left[
    \mu(Y)\bigl(\riemannian{\nabla_X \mu^\sharp}{Z} - \d\mu(X,Z)\bigr)
    +
    \mu(X)\bigl(\riemannian{\nabla_Y \mu^\sharp}{Z} - \d\mu(Y,Z)\bigr)
  \right]
  \\
  -
  \frac{2\mu(X)\mu(Y)}{\Lambda^2}
  \bigl(
    \riemannian{\nabla_{\mu^\sharp}\mu^\sharp}{Z}
    -
    \d\mu(\mu^\sharp,Z)
  \bigr)
  \\
  =
  \frac{2}{\Lambda}
  \riemannian[auto]{
    \left[
      \frac{1}{\Lambda}
      \riemannian{
        \mu(Y)\nabla_X \mu^\sharp + \mu(X)\nabla_Y \mu^\sharp
      }{\mu^\sharp}
      -
      \riemannian{\nabla_X \mu^\sharp}{Y}
    \right]\mu^\sharp
  }{Z}
  \\
  +
  \frac{1}{\Lambda}
  \riemannian[auto]{
    \d\mu(X,Y)
    \mu^\sharp
  }{Z}
  -
  \frac{2 \mu(X)\mu(Y)}{\Lambda^2}
  \riemannian[auto]{
    \nabla_{\mu^\sharp}\mu^\sharp
  }{Z}
  \\
  +
  \frac{1}{\Lambda}\,
  \d\mu\!\left(
    \frac{2\mu(X)\mu(Y)}{\Lambda}\,\mu^\sharp
    - \mu(Y)X - \mu(X)Y,
    Z
  \right),
\end{multline*}
\endgroup
where the last equality follows from the linearity of $\d\mu$ in its first component. 
Using the musical isomorphism $\cdot^\sharp$ on $\d\mu(\ast, \cdot)$ now gives
\begingroup\small
\begin{multline*}
  T_XY - \frac{\mu(T_XY)}{\Lambda}\,\mu^\sharp
  =
  \frac{1}{\Lambda^2}
  \riemannian{
    \mu(Y)\nabla_X \mu^\sharp + \mu(X)\nabla_Y \mu^\sharp
  }{\mu^\sharp}
  \mu^\sharp
  \\
  +
  \frac{1}{\Lambda}
  \left[
    -
    \riemannian{
      \nabla_X \mu^\sharp
    }{Y}
    +
    \frac{1}{2}\,\d\mu(X,Y)
  \right]\mu^\sharp
  \\
  -
  \frac{\mu(X)\mu(Y)}{\Lambda^2}\,\nabla_{\mu^\sharp}\mu^\sharp
  \\
  +
  \frac{1}{2\Lambda}\,
  \d\mu\!\left(
    \frac{2\mu(X)\mu(Y)}{\Lambda}\,\mu^\sharp
    - \mu(Y)X - \mu(X)Y,
    \cdot
  \right)^\sharp.
\end{multline*}
\endgroup
By decomposing $T_XY$ in components parallel and orthogonal to $\mu^\sharp$, and defining
\[
\mu^\sharp_\perp
\coloneqq
\frac{\mu(X)\mu(Y)}{\Lambda}\,\mu^\sharp
-
\frac{\mu(Y)}{2}\,X
-
\frac{\mu(X)}{2}\,Y,
\]
it is straightforward to establish
\begingroup\small
\begin{equation}
\label{eq:difference-tensor}
\begin{aligned}
  T_XY
  ={}&
  \left[
    \frac{1}{\Lambda}
    \riemannian{
      \mu(Y)\nabla_X \mu^\sharp + \mu(X)\nabla_Y \mu^\sharp
    }{\mu^\sharp}
    -
    \riemannian{\nabla_X \mu^\sharp}{Y}
  \right]\mu^\sharp
  \\
  &
  \hspace{-3em}
  +
  \left[
    \frac{1}{2}\,\d\mu(X,Y)
    -\frac{\mu(X)\mu(Y)}{\Lambda}
    \frac{\mu(\nabla_{\mu^\sharp}\mu^\sharp)}{\riemanniannorm{\mu^\sharp}^2}
    +
    \frac{\Lambda\,\d\mu(\mu^\sharp_\perp,\mu^\sharp)}{\riemanniannorm{\mu^\sharp}^2}
  \right]\mu^\sharp
  \\
  &-
  \frac{\mu(X)\mu(Y)}{\Lambda^2}
  \left[
    \nabla_{\mu^\sharp}\mu^\sharp
    -
    \frac{\mu(\nabla_{\mu^\sharp}\mu^\sharp)}{\riemanniannorm{\mu^\sharp}^2}\,\mu^\sharp
  \right]
  \\
  &+
  \frac{1}{\Lambda}\,\d\mu(\mu^\sharp_\perp,\cdot)^\sharp
  -
  \frac{\d\mu(\mu^\sharp_\perp,\mu^\sharp)}{\riemanniannorm{\mu^\sharp}^2}\,\mu^\sharp,
\end{aligned}
\end{equation}
\endgroup
which concludes the proof.
\end{proof}

\section{Riemannian Trajectory Tracking}
\subsection{Metric preconditioning for connection control systems}

Let $(\mathcal M,g)$ be a smooth Riemannian manifold with Levi--Civita connection $\nabla$. An Affine Connection Control System on $\mathcal M$ with control distribution $\mathcal{D} \subset \tangentBundle$ is of the form
\begin{equation}
    \label{eq:connection-control-affine-M}
    \nabla_{\dot q}\dot q
    =
    Y_0(q(t))+
    \sum_{j=1}^k u^j(t)Y_j(q(t)),
\end{equation}
where $Y_0\in\mathfrak X(\mathcal M)$ is the drift, $Y_1,\ldots,Y_k\in\mathfrak X(\mathcal M)$ is a local frame spanning $\mathcal{D}$, and $u^j\in\mathcal C^0([0, t_{\text{f}}])$ are the control inputs with respect to this local frame \cite{Lewis}, for some final time $t_{\text{f}} > 0$.
In the case that $\mathcal D = \tangentBundle$, we say that the system is fully-actuated, and so the right-hand side of \eqref{eq:connection-control-affine-M} may be chosen arbitrarily in $\tangentSpace{q}$. In the underactuated case, $\mathcal D$ is a proper subbundle of $\tangentBundle$, and only the projection of a desired covariant acceleration onto the control distribution can generally be realized.

Let $q_d\in \mathcal C^2([0,t_{\text{f}}],\mathcal M)$ be a reference trajectory. Our aim is to design control inputs $u^j(t)$ such that the closed-loop integral curves of \eqref{eq:connection-control-affine-M} track $q_d(t)$ on $[0,t_{\text{f}}]$, at which point we say that the control system \textit{tracks} $q_d$. To properly study this convergence, we require a notion of \textit{tracking error}. Let $\retractionSymbol: \tangentBundle \to \mathcal{M}$ denote a retraction on $\mathcal{M}$, and assume that the inverse retraction
\(
\retractionSymbol_q^{-1}(q_d(t))\in \tangentSpace{q}
\)
is well-defined along the portion of the state space under consideration, and let $\mathrm{VT}^{\mathrm{retr}} \colon \tangentBundle \to \tangentBundle$ denote the vector transport associated to $\mathrm{retr}$. We define the configuration errors
\[
e_q \coloneq -\inverseRetract{q}{q_d}\in \tangentSpace{q},
\qquad
e_v \coloneq \dot q - \mathrm{VT}_{q_d \to q}^{\mathrm{retr}} \dot q_d\in \tangentSpace{q},
\]
which, within a tubular neighborhood of the nominal trajectory $q_d$, satisfy the properties that $e_q = e_v = 0$ if and only if $q \equiv q_d$. In the special case where the retraction is the exponential map, and vector transport is parallel transport, we obtain configuration errors consistent with those studied in \cite{configError}.
Consider the PD control with nominal trajectory feedback given by:
\begin{equation}
    \label{eq:old-control-rewrite}
    u_{\mathrm{f}}
    \coloneq 
    \mathrm{VT}_{q_d \to q}^{\mathrm{retr}}
    \left(
        \nabla_{\dot q_d}\dot q_d
    \right)
    -
    k_q e_q - k_v e_v.
\end{equation}
In the case that the system \eqref{eq:connection-control-affine-M} is fully-actuated, the drift term can be removed via feedback, and the closed-loop dynamics of the corresponding driftless system with the above control has the same first-order form as the Euclidean PD tracking dynamics:
\[
    \dot e_q
    =
    e_v
    +
    O(\|e_q\|),
    \qquad
    \dot e_v
    =
    -k_qe_q-k_ve_v
    +
    O(\|e_q\|+\|e_v\|).
\]
where the higher-order terms depend on the curvature of the connection, the chosen retraction, and the chosen vector transport. Hence, the gains $k_q,\, k_v \ge 0$ can be interpreted as parameters which shape the eigenvalues of the local linearized error dynamics. The goal is therefore to choose gains such that the tracking errors remain small on $[0, t_{\text{f}}]$, and converge asymptotically to zero when the reference trajectory is extended in time.

We now explain how the metric modification can be used as a \emph{geometric preconditioner} for this feedback law. Fix a vector field $X\in\mathfrak X(\mathcal M)$, choose $\lambda\ge 0$, and set
\(
\mu\coloneq \lambda X^\flat,
\)
and
\begin{equation}
\label{eq:g-lambda}   
 g_\lambda
 \coloneq
 g-
 \frac{\mu\otimes\mu}{1+\|\mu^\sharp\|_g^2}
    =
 g-
 \frac{\lambda^2 X^\flat\otimes X^\flat}{1+\lambda^2\|X\|_g^2}.
\end{equation}
Denote by $\widetilde\nabla^\lambda$ the Levi--Civita connection of $g_\lambda$, and write
\[
\widetilde\nabla^\lambda_YZ
=
\nabla_YZ
+
T^\lambda_Y Z,
\]
where $T^\lambda$ is the difference tensor \eqref{eq:difference-tensor} from Lemma~\ref{lemma:new-levi-civita}.

The role of $X$ is to encode a preferred direction of motion. In particular, if $Y$ is $g$-orthogonal to $X$, then
\[
\|Y\|_{g_\lambda}=\|Y\|_g,
\]
whereas along $X$ one has
\begin{equation}
    \label{eq:modified_norm_lambda_rewrite}
    \|X\|_g^2 \ge \|X\|_{g_\lambda}^2
    =
    \frac{\|X\|_g^2}{1+\lambda^2\|X\|_g^2}
    \to 0
    \quad\text{as } \lambda\to\infty .
\end{equation}
Thus, motion tangent to $X$ becomes progressively cheaper as $\lambda$ increases, while directions $g$-orthogonal to $X$ are unchanged. If $X$ is chosen so that $X(q_d(t))$ approximates $\dot q_d(t)$ along the reference trajectory, then $g_\lambda$ biases the kinetic geometry of the system toward the nominal motion.

This observation motivates the following control design. Rather than prescribing the closed-loop dynamics relative to the original connection $\nabla$, we prescribe the closed-loop dynamics relative to the modified connection:
\begin{equation}
    \label{eq:modified_connection_target}
    \widetilde\nabla^\lambda_{\dot q}\dot q
    =
    \mathrm{VT}_{q_d \to q}^{\mathrm{retr}}
    \left(
        \widetilde\nabla^\lambda_{\dot q_d}\dot q_d
    \right)
    -
    k_q e_q - k_v e_v.
\end{equation}
Using $\widetilde\nabla^\lambda=\nabla+T^\lambda$, \eqref{eq:modified_connection_target} is equivalent to
\[
\nabla_{\dot q}\dot q
=
\mathrm{VT}_{q_d \to q}^{\mathrm{retr}}
\left(
    \nabla_{\dot q_d}\dot q_d
    +
    T^\lambda_{\dot q_d}\dot q_d
\right)
-
k_q e_q
-
k_v e_v
-
T^\lambda_{\dot q}\dot q.
\]
Equivalently, in terms of the nominal feedforward law \eqref{eq:old-control-rewrite}, this yields the \emph{preconditioned} fully actuated control law
\begin{equation}
    \label{eq:preconditioned-control-rewrite}
    u_{\mathrm{pre}}
    \coloneq
    u_{\mathrm{f}}
    +
    \mathrm{VT}_{q_d \to q}^{\mathrm{retr}}
    \left(
        T^\lambda_{\dot q_d}\dot q_d
    \right)
    -
    T^\lambda_{\dot q}\dot q.
\end{equation}
Accordingly, if the system is fully actuated, \ie if $\mathcal D = \tangentBundle$, then \eqref{eq:preconditioned-control-rewrite} realizes exactly the closed-loop dynamics \eqref{eq:modified_connection_target} within the affine connection control system \eqref{eq:connection-control-affine-M}. If the system is underactuated, a natural surrogate is to project $u_{\mathrm{pre}}$ orthogonally onto the control distribution:
\[
u_{\mathrm{pre}}^{\mathcal D}
\coloneq
\Pi_{\mathcal D}\!\big(u_{\mathrm{pre}}\big),
\]
where $\Pi_{\mathcal D}$ denotes the $g$-orthogonal projection onto $\mathcal D\subset \tangentBundle$.

We remark that it is not claimed here that the $g_\lambda$-geodesics converge to integral curves of $X$ as $\lambda\to\infty$. Establishing such a result would require a separate analysis of the geodesic spray, or equivalently of the associated energy functionals, and is beyond the scope of the present work. Our aim here is more modest: to propose a geometrically natural preconditioning strategy, to identify its exact implementation in the closed-loop equations, and to provide heuristic justification for why it should improve tracking performance when $X$ is well aligned with the desired direction of motion.

\begin{remark}
    \label{rm:preconditioner-gains-linearization}
    In the special case where $\mathcal{M} = \mathbb{R}$ and $g$ is the standard Euclidean inner product, the control system \eqref{eq:modified_connection_target} takes the form
    \[
        \ddot{x}
        =
        \frac{\lambda^2 X(x) X'(x)}
        {1 + \lambda^2 X(x)^2}\dot{x}^2
        +
        u_{\mathrm{f}} .
    \]
    Letting $v = \dot{x}$, and defining the configuration errors
    $e_x = x - x_d$ and $e_v = v - v_d$, set
    \[
        F_\lambda(x)
        \coloneq
        \frac{\lambda^2 X(x) X'(x)}
        {1 + \lambda^2 X(x)^2}.
    \]
    To make the zero-error trajectory an exact solution, we choose the
    feedforward term so that the modified covariant acceleration of the
    desired trajectory is matched at zero error:
    \[
        u_{\mathrm f}
        =
        \ddot{x}_d
        -
        F_\lambda(x_d)v_d^2
        +
        u_{\mathrm{pd}},
        \qquad
        u_{\mathrm{pd}}
        =
        -k_xe_x-k_ve_v .
    \]
    We then obtain the error dynamics
    \begin{align*}
        \dot{e}_x &= e_v,\\
        \dot{e}_v
        &=
        F_\lambda(e_x+x_d)(e_v+v_d)^2
        -
        F_\lambda(x_d)v_d^2
        +
        u_{\mathrm{pd}} .
    \end{align*}
    In particular, $(e_x,e_v)=(0,0)$ is an exact solution of the error
    dynamics. Taylor expanding $F_\lambda$ around $e_x=0$ and truncating
    higher-order terms, we obtain the following linearized closed-loop
    error dynamics:
    \[
        \begin{bmatrix}
            \dot{e}_x \\
            \dot{e}_v
        \end{bmatrix}
        =
        \begin{bmatrix}
            0 & 1 \\
            F_\lambda'(x_d)v_d^2 - k_x
            &
            2F_\lambda(x_d)v_d - k_v
        \end{bmatrix}
        \begin{bmatrix}
            e_x \\ e_v
        \end{bmatrix}.
    \]
    Thus, at the level of the frozen-time linearization, the
    preconditioner modifies the effective damping and stiffness according
    to
    \[
        k_v \mapsto k_v - 2F_\lambda(x_d)v_d,
        \qquad
        k_x \mapsto k_x - F_\lambda'(x_d)v_d^2 .
    \]
    The corresponding Routh--Hurwitz conditions for the frozen-time
    linearization are
    \[
        2F_\lambda(x_d)v_d < k_v,
        \qquad
        F_\lambda'(x_d)v_d^2 < k_x .
    \]
    These inequalities hold for all positive gains $k_x,k_v$ in the
    special case where
    \[
        F_\lambda(x_d)v_d < 0,
        \qquad
        F_\lambda'(x_d) < 0 .
    \]
    If the nominal trajectory is an integral curve of $X$, so that
    $v_d = X(x_d)$, then the first condition is equivalent to
    $X'(x_d)<0$, assuming $X(x_d)\neq 0$. Hence, the damping contribution
    is favorable when the vector field is contractive near the nominal
    trajectory. On the other hand, when
    \[
        F_\lambda(x_d)v_d > 0
        \qquad\text{or}\qquad
        F_\lambda'(x_d) > 0,
    \]
    the preconditioner acts against the ordinary Euclidean PD controller
    by reducing the effective damping or stiffness, and larger gains may
    be required to stabilize the linearized error dynamics.

    We emphasize that this calculation is only a first-order, frozen-time
    diagnostic. Since $x_d(t)$ is generally time-dependent, the full
    variational equation is time-varying, and pointwise stability of the
    frozen-time matrices does not by itself imply nonlinear asymptotic
    stability. Nevertheless, the calculation illustrates that the effect
    of the preconditioner is highly sensitive to the local behavior of
    the vector field used to modify the geometry around the nominal
    trajectory. We leave a deeper analysis of these conditions to future
    work.
\end{remark}

\subsection{Rigid Body Control on $\SE(3)$}
Consider the Lie group
\[
\SE(3)=
\left\{
\begin{bmatrix}
    R & p \\ 0 & 1
\end{bmatrix}
\in\mathbb R^{4\times 4}
\,\middle\vert\,
R\in\SO(3),\ p\in\mathbb R^3
\right\}.
\]
We frequently compress the notation to $(R,p)\in\SO(3)\times\mathbb R^3$, with the group operation
\[
(R_1,p_1)\cdot(R_2,p_2)
=
(R_1R_2,R_1p_2+p_1).
\]
The Lie algebra is given by
\[
\se(3)=
\left\{
\begin{bmatrix}
    \hat\Omega & v \\ 0 & 0
\end{bmatrix}
\,\middle\vert\,
\hat\Omega\in\so(3),\ v\in\mathbb R^3
\right\}.
\]
Using the hat isomorphism $\hat{\cdot}\colon\mathbb R^3\to\so(3)$ defined implicitly by $\hat x y\coloneq x\times y$ for all $x,y\in\mathbb R^3$, we identify $\se(3)\cong\mathbb R^3\times\mathbb R^3$ via $(\Omega,v)\longmapsto (\hat\Omega,v)$.
We equip $\mathbb R^3\times\mathbb R^3$ with the inner product
\[
\langle(\Omega_1,v_1),(\Omega_2,v_2)\rangle
=
\Omega_1^T\mathbb J\Omega_2+v_1^Tv_2,
\]
where $\mathbb J\in\operatorname{Sym}_3^+(\mathbb R)$ is a symmetric, positive-definite \emph{inertia tensor}. This inner product is pulled back to $\se(3)$ through the hat isomorphism and then left-translated to obtain a left-invariant Riemannian metric on $\SE(3)$ used in the simulation.

For a curve $(R(t),p(t))\in\SE(3)$, we use the left-trivialized velocity $(\Omega,v)\in\mathbb R^3\times\mathbb R^3$ defined by
\[
\dot R=R\hat\Omega,
\qquad
\dot p=Rv.
\]
With respect to the above metric, the unforced rigid-body terms take the form
\[
\dot v=-\Omega\times v,
\qquad
\dot\Omega=\mathbb J^{-1}(\mathbb J\Omega\times\Omega).
\]
In many control problems on $\SE(3)$, such as trajectory tracking of quadrotor UAVs, the rotational dynamics are fully actuated while the translational dynamics are underactuated \cite{Lee}. We therefore take the control distribution to be generated by the $\se(3)$ elements $(\hat e_1,0),\quad(\hat e_2,0),\quad(\hat e_3,0),\quad(0,e_3)$,
where $\{e_1,e_2,e_3\}$ is the standard basis of $\mathbb R^3$. A gravitational drift of the form $\rho e_3$ in the translational equation may be removed by the feedback $u^4(t)=f(t)-\rho$. Since this force lies in the control distribution, we proceed under the driftless form without loss of generality:
\begin{align*}
    \dot R &= R\hat\Omega,
    &
    \dot p &= Rv,\\
    \dot v &= -\Omega\times v+f e_3,
    &
    \dot\Omega &= \mathbb J^{-1}(\mathbb J\Omega\times\Omega)+ \tau,
\end{align*}
with $f\in\mathbb R$ the thrust and $\tau\in\mathbb R^3$ the body torque input.

Since the system is underactuated, it will not generally be possible to track arbitrary nominal trajectories $(R_d,p_d)$ in both position and orientation. For our purposes, it is more pressing to track a given nominal trajectory in position, which allows us to use a strategy similar to that proposed in \cite{Lee}. Observe that the translational dynamics can equivalently be written as
\[
\ddot p=fRe_3.
\]
We treat this system as a fully actuated double integrator $\ddot p=\nu$, with $\nu$ designed as in equation \eqref{eq:old-control-rewrite} or \eqref{eq:preconditioned-control-rewrite}. We then choose the desired orientation $R_d$ so that
\begin{equation}
  \label{eq:desired-orientation}
  fR_de_3=\nu
  \qquad\implies\qquad
  R_de_3=\frac{\nu}{\|\nu\|}.
\end{equation}
In other words, the nominal attitude trajectory $R_d$ is chosen so that the translational control input $fRe_3$ tracks the desired acceleration $\nu$. Given $\nu$, this uniquely determines the final column $b_3=R_de_3$ of $R_d$. To specify the remaining degrees of freedom, we additionally specify the first column $b_1=R_de_1$, which must satisfy $\|b_1\|=1$ and $b_1\cdot b_3=0$. We choose
\[
b_1=
\frac{e_1\times b_3}{\|e_1\times b_3\|},
\]
which is valid whenever $b_3\ne\pm e_1$. We note that there is no choice in $b_1$ which varies continuously with $b_3$ while satisfying $\|b_1\|=1$ and $b_1\cdot b_3=0$, as this would be equivalent to the existence of a continuous, nowhere vanishing vector field on the sphere $\mathbb S^2$, which contradicts the Hairy Ball Theorem.

\section{Numerical Experiments}
We consider a maze navigation problem on $\SE(3)$ with three straight corridors connected together in series. Let $P[p,q]$ denote the plane in $\R^3$ orthogonal to the $xy$-plane and running through the points $p \in \R^2$ and $q \in \R^2$ on the $xy$-plane. The maze is given by the region in $\R^3$ bounded by $0 \le z \le 2$, and the planes $P[(0,\,-2),\,(0,\,0)]$, $P[(1,\,-2),\,(1,\,1/2)]$, 
$P[(1,\,1/2),\,(-1/2,\,1/2)]$, $P[(-1/2,\,1/2),\,(-1/2,\,3/2)]$, $P[(0,\,0),\,(-1,\,0)]$, $P[(-1, \,0), \,(-1, \,3/2)]$.

To test the performance of our method, we compare it to the one presented in \cite{Lee}, referred to as ``Euclidean'' in the following. 
We do so by conducting two experiments.
The inertia tensor is taken from \cite{Pounds}: $\bbJ \coloneq \text{diag}([0.082, 0.0845, 0.1377])$, and it is fixed throughout our numerics.
\footnote{The code can be found at \url{https://doi.org/10.5281/zenodo.20442932}.} In all cases, the torque input $\tau$ is chosen as in \cite{Lee}.

\subsection{Tracking an Integral Curve}
We modify the translational geometry only.
That is, we design a vector field $X \in \Gamma(\bbR^3)$ with bounded norm and low divergence that flows through the given maze, \ie its integral curves follow the center line of the corridors that form the maze.
We impose a non-constant tangential speed on $X$ and a small wave in the $z$-component, so that the acceleration of the reference trajectory does not vanish along ``straight'' stretches of the curve.
In practice, this is achieved by combining three smooth vector fields for each of the three corridors, each equipped with local coordinates $(\rho_i, s_i)$ describing the transverse offset and the arc-length along a corridor, increasing from the starting point $p_0 \coloneq [1/2, 18/10, 1]^\top$ in the direction of the endpoint of the desired trajectory, $p_{t_{\text{f}}} \coloneq [- 3/4, 5/4, 1]^\top$.
We use local vector fields 
\begin{equation*}
  F_i(\rho_i, s_i)
  \coloneq
  v(s_i) \hat t_i
  -
  \gamma \rho_i \hat n_i
  ,
\end{equation*}
where $v(s)$ % = v_0 (1 + \alpha \tanh \frac{s - \bar s}{l})$ 
is a mildly varying tangential speed, $\gamma > 0$ is a transverse centering gain that makes nearby integral curves stay within the corridor, and $\hat t_i, \hat n_i$ are the unit tangent and normal vectors of the $i$-th corridor, $i=1,2,3$, respectively. 
Each of these vector fields is activated by a smooth compactly-supported weight $w_i \coloneq \exp\big(- \frac{\rho_i^2}{\sigma^2}\big) \, l(s_i, s_{\text{min}, i}, s_{\text{max},i })$, where $l$ is a logistic gate.
A wave $W = A \sin(\omega \phi + \psi)$, with an amplitude $A = 0.035$, a phase $\phi = \frac{\sum_{i=1}^{3} w_i \phi_i}{\sum_{i=1}^{3}w_i}$, and shift $\psi = \pi/4$ is added to the weighted sum of these vector fields to prevent zero material acceleration along straight segments, which would result in poor numerical conditioning. 
We then saturate the resulting vector field with a prescribed magnitude to obtain $X$.
Finally, we construct an integral curve of $X$ by accurately solving its induced ODE numerically with initial value given by $p_0$, and utilize this as the desired trajectory $p_d$.

We extend $X$ to a vector field on $\SE(3)$ with a $3 \times 3$ matrix of zeros.
As per Remark \ref{rm:modification-via-vector}, we set $\mu = X^\flat$.
This choice of $\mu$ also enables the computation of the difference tensor only for the evolution on $\bbR^3$, rather than having to compute it for the full Levi--Civita connection of $\SE(3)$.
Then, we obtain the control $\nu = u_{\text{f}} - T^\lambda_{\dot p} e_v$, and construct $R_d$ according to \eqref{eq:desired-orientation}.

To select gains for both methodologies, we run a sweep over a grid of gains for the rotational and translational components, as well as for the velocity and attitude.
The objective of the sweep is chosen so as to minimize 
$$\|u\|^2_{L^2} + \frac{1}{4} \max \|u\|^2$$
over all of the gains that successfully solve the tracking problem in the given grid.
We saturate the thrust and body torque inputs with $f_\text{max} = 10$, and $\tau_\text{max} = 100$, respectively.
The starting point for all our simulations is $q_0 \coloneq p_0 + [1/4, -5/100, 0]^\top$.
In our case, the procedure returned $k_R^E = 5.75, k_q^E = 5.75, k_v^E = 7.525, k_\Omega^E = 2.575$ and $k_R^P = 5.75, k_q^P = 5.75, k_v^P = 5.05, k_\Omega^P = 5.05$ for the Euclidean and preconditioned strategies, respectively.
We observe only a higher rotation gain for our preconditioner.
The results of the simulation with this choice of gains are summarized in Figure \ref{fig:exact-10-100-integral-curve}.
The preconditioned strategy yields lower position tracking errors for $2/3$ of the simulation time. 
We remark that the position errors being higher in the first third is due to the parametrization of the trajectories.
Indeed, the Euclidean trajectory starts out faster and reaches a point that is closer to the nominal trajectory in the first third of the simulation. But when the preconditioned trajectory catches up, it does a significantly better job at tracking the desired curve.
We refer the reader to the animation in the linked repository.
Crucially, the preconditioner is able to track the desired trajectory visibly better from slightly before the first turn onwards.
A similar comment holds regarding the velocity tracking errors.
The control are mostly comparable, with exceptions around the elbows of the maze, where the body torque input is stronger for the preconditioned PD.
We also note that the thrust is halved for the preconditioned PD around the second corridor.  

This suggests that a possible future approach is to design a hybrid control scheme, where ordinary PD control is used initially until the tracking errors are sufficiently small, and then the preconditioner is activated to keep tracking errors low throughout high-jerk motions that the Euclidean PD is unable to account for without large controller gains.
\begin{figure}[t]
  \centering
  %--- (a) Trajectory: full column width ---
  \subcaptionbox{Trajectory, $xy$ projection. Orange: $p_0$, green: $p_{t_{\text{f}}}$. \label{fig:traj}}{%
    \begin{tikzpicture}
      \begin{axis}[
        legend style={font=\scriptsize},
        legend pos=north east,
        width=0.6\columnwidth, height=0.6\columnwidth%4.75cm
      ]
        \foreach \xa/\ya/\xb/\yb in {
          0.0/-2.0/0.0/0.0, 1.0/-2.0/1.0/0.5,
          1.0/0.5/-0.5/0.5, 0.0/0.0/-1.0/0.0,
          -1.0/0.0/-1.0/1.5, -0.5/0.5/-0.5/1.5
        }{ \addplot[black, thick] coordinates {(\xa,\ya) (\xb,\yb)}; }
        
        \addplot[black, dashed, ultra thick]
          table[col sep=comma, x=q_ref_x, y=q_ref_y]{data/trajectory.csv};
        \addlegendentry{Desired}
        
        \addplot[only marks, mark options={draw opacity=0, fill=Orange, mark size=1.5pt}]
          coordinates {(0.5,-1.8)};
          
        \addplot[only marks, mark options={draw opacity=0, fill=Green, mark size=1.5pt}]
          coordinates {(-0.75,1.25)};
          
        \addplot[red, thick]
          table[col sep=comma, x=q_x, y=q_y]{data/exact_f10_u100_euclidean_trajectory.csv};
        % \addlegendentry{Euclidean PD}
        
        \addplot[blue, thick]
          table[col sep=comma, x=q_x, y=q_y]{data/exact_f10_u100_preconditioned_trajectory.csv};
        % \addlegendentry{Preconditioned PD}
        
        \addplot[blue!60, quiver={u=\thisrow{u}, v=\thisrow{v}, scale arrows=0.1}, -stealth]
          table[col sep=comma, x=x, y=y]{data/vector_field.csv};
          
      \end{axis}
    \end{tikzpicture}
  }
  \\[1em]
  %--- (b) Position error ---
  \subcaptionbox{Position error $\|p-p_d\|$.\label{fig:pos-err}}{%
    \begin{tikzpicture}
      \begin{axis}[
        legend style={font=\scriptsize},
        yticklabel style={font=\small},
        xticklabel style={font=\small},
        width=0.55\columnwidth, height=4.75cm
      ]
        \addplot[red, thick]
          table[col sep=comma, x=t, y=pos_err]{data/exact_f10_u100_euclidean_errors_control.csv};
        \addlegendentry{Euclidean}
        
        \addplot[blue, thick]
          table[col sep=comma, x=t, y=pos_err]{data/exact_f10_u100_preconditioned_errors_control.csv};
        \addlegendentry{Precond.}
      \end{axis}
    \end{tikzpicture}
  }%\hfill
  % --- (c) Velocity error ---
  \subcaptionbox{Velocity error $\|Rv-v_d\|$.\label{fig:vel-err}}{%
    \begin{tikzpicture}
      \begin{axis}[
        yticklabel style={font=\small},
        xticklabel style={font=\small},
        width=0.55\columnwidth, height=4.75cm
      ]
        \addplot[red, thick]
          table[col sep=comma, x=t, y=vel_err]{data/exact_f10_u100_euclidean_errors_control.csv};
        \addplot[blue, thick]
          table[col sep=comma, x=t, y=vel_err]{data/exact_f10_u100_preconditioned_errors_control.csv};
      \end{axis}
    \end{tikzpicture}
  }%\hfill
  \\[1em]
  %--- (d) Control magnitude ---
  \subcaptionbox{Body torque $\|\tau\|$.\label{fig:ctrl}}{%
    \begin{tikzpicture}
      \begin{axis}[
        legend pos=north east,
        legend style={font=\small},
        yticklabel style={font=\small},
        xticklabel style={font=\small},
        width=0.55\columnwidth, height=4.75cm
      ]
        \addplot[red, thick]
          table[col sep=comma, x=t, y=u_norm]{data/exact_f10_u100_euclidean_errors_control.csv};
        
        \addplot[blue, thick]
          table[col sep=comma, x=t, y=u_norm]{data/exact_f10_u100_preconditioned_errors_control.csv};
        
      \end{axis}
    \end{tikzpicture}
  }
  % \\[1em]
  %--- (f) Thrust ---
  \subcaptionbox{Thrust $f$.\label{fig:ctrl}}{%
    \begin{tikzpicture}
      \begin{axis}[
        legend pos=north east,
        legend style={font=\small},
        yticklabel style={font=\small},
        xticklabel style={font=\small},
        width=0.55\columnwidth, height=4.75cm
      ]
        \addplot[red, thick]
          table[col sep=comma, x=t, y=f]{data/exact_f10_u100_euclidean_errors_control.csv};
        
        \addplot[blue, thick]
          table[col sep=comma, x=t, y=f]{data/exact_f10_u100_preconditioned_errors_control.csv};
        
      \end{axis}
    \end{tikzpicture}
  }
  \caption{Tracking of an integral curve of $X$. Errors and controls plotted as functions of time.}
  \label{fig:exact-10-100-integral-curve}
\end{figure}
\subsection{Tracking a Different Curve}
In this experiment, we keep $\mu$ as in the previous section, but we change the desired trajectory. 
We generate a different trajectory by integrating another vector field that navigates the maze in a similar fashion to the previous experiment, but with different parameters.
The result is a trajectory $\tilde p_d$ that, while still navigating the given maze, is \emph{not} an integral curve of the vector field $X$ used to modify the original metric.
The two curves have a mean distance of around $0.3$ in both $x$ and $y$ components.
In particular, the two trajectories differ the most along the second corridor of the maze, including their inflection points.
That is, this new trajectory has a larger jerk and snap than the original one.

We kept the same control saturation from the previous experiment.
In this case, the gain sweep returned $k_R^E = 15.25, k_q^E = 5.75, k_v^E = 7.525, k_\Omega^E = 5.05$ and $k_R^P = 20.0, k_q^P = 5.75, k_v^P = 10.0, k_\Omega^P = 7.525$ for the Euclidean and preconditioned strategies, respectively.
The results of this simulation are summarized in Figure \ref{fig:inexact-10-100-integral-curve}.
The preconditioned strategy yields lower position tracking errors for around $75\%$ of the simulation time, but needs higher gains for both the rotational component, as well as for the velocity.
We observe here that the Euclidean PD is more accurate along the second corridor of the maze, in contrast to the previous case, which is in line with Remark \ref{rm:preconditioner-gains-linearization}.
The velocity tracking errors are also smaller for the preconditioner for about $75\%$ of the simulation time, with the Euclidean strategy tracking the velocity of the curve better around its turning points.
We refer the reader to the animation in the linked repository.
The controls are largely comparable along the whole trajectory, with the highest spikes being somewhat delayed for the body torque.
\begin{figure}[t]
  \centering
  %--- (a) Trajectory: full column width ---
  \subcaptionbox{Trajectory, $xy$ projection. Orange: $p_0$, green: $p_{t_{\text{f}}}$. \label{fig:traj}}{%
    \begin{tikzpicture}
      \begin{axis}[
        legend style={font=\scriptsize},
        legend pos=north east,
        width=0.6\columnwidth, height=0.6\columnwidth%4.75cm
      ]
        \foreach \xa/\ya/\xb/\yb in {
          0.0/-2.0/0.0/0.0, 1.0/-2.0/1.0/0.5,
          1.0/0.5/-0.5/0.5, 0.0/0.0/-1.0/0.0,
          -1.0/0.0/-1.0/1.5, -0.5/0.5/-0.5/1.5
        }{ \addplot[black, thick] coordinates {(\xa,\ya) (\xb,\yb)}; }
        
        \addplot[black, dashed, ultra thick]
          table[col sep=comma, x=q_ref_x, y=q_ref_y]{data/inexact_trajectory.csv};
        \addlegendentry{Desired}
        
        \addplot[only marks, mark options={draw opacity=0, fill=Orange, mark size=1.5pt}]
          coordinates {(0.5,-1.8)};
          
        \addplot[only marks, mark options={draw opacity=0, fill=Green, mark size=1.5pt}]
          coordinates {(-0.75,1.25)};
          
        \addplot[red, thick]
          table[col sep=comma, x=q_x, y=q_y]{data/inexact_f10_u100_euclidean_trajectory.csv};
        % \addlegendentry{Euclidean PD}
        
        \addplot[blue, thick]
          table[col sep=comma, x=q_x, y=q_y]{data/inexact_f10_u100_preconditioned_trajectory.csv};
        % \addlegendentry{Preconditioned PD}
        
        \addplot[blue!60, quiver={u=\thisrow{u}, v=\thisrow{v}, scale arrows=0.1}, -stealth]
          table[col sep=comma, x=x, y=y]{data/vector_field.csv};
          
      \end{axis}
    \end{tikzpicture}
  }
  \\[1em]
  %--- (b) Position error ---
  \subcaptionbox{Position error $\|p-p_d\|$.\label{fig:pos-err}}{%
    \begin{tikzpicture}
      \begin{axis}[
        legend style={font=\scriptsize},
        yticklabel style={font=\small},
        xticklabel style={font=\small},
        width=0.55\columnwidth, height=4.75cm
      ]
        \addplot[red, thick]
          table[col sep=comma, x=t, y=pos_err]{data/inexact_f10_u100_euclidean_errors_control.csv};
        \addlegendentry{Euclidean}
        
        \addplot[blue, thick]
          table[col sep=comma, x=t, y=pos_err]{data/inexact_f10_u100_preconditioned_errors_control.csv};
        \addlegendentry{Precond.}
      \end{axis}
    \end{tikzpicture}
  }%\hfill
  % \\[1em]
  % --- (c) Velocity error ---
  \subcaptionbox{Velocity error $\|Rv-v_d\|$.\label{fig:vel-err}}{%
    \begin{tikzpicture}
      \begin{axis}[
        yticklabel style={font=\small},
        xticklabel style={font=\small},
        width=0.55\columnwidth, height=4.75cm
      ]
        \addplot[red, thick]
          table[col sep=comma, x=t, y=vel_err]{data/inexact_f10_u100_euclidean_errors_control.csv};
        \addplot[blue, thick]
          table[col sep=comma, x=t, y=vel_err]{data/inexact_f10_u100_preconditioned_errors_control.csv};
      \end{axis}
    \end{tikzpicture}
  }%\hfill
  \\[1em]
  %--- (d) Control magnitude ---
  \subcaptionbox{Body torque $\|\tau\|$.\label{fig:ctrl}}{%
    \begin{tikzpicture}
      \begin{axis}[
        legend pos=north east,
        legend style={font=\small},
        yticklabel style={font=\small},
        xticklabel style={font=\small},
        width=0.55\columnwidth, height=4.75cm
      ]
        \addplot[red, thick]
          table[col sep=comma, x=t, y=u_norm]{data/inexact_f10_u100_euclidean_errors_control.csv};
        
        \addplot[blue, thick]
          table[col sep=comma, x=t, y=u_norm]{data/inexact_f10_u100_preconditioned_errors_control.csv};
        
      \end{axis}
    \end{tikzpicture}
  }
  % \\[1em]
  %--- (f) Thrust ---
  \subcaptionbox{Thrust $f$.\label{fig:ctrl}}{%
    \begin{tikzpicture}
      \begin{axis}[
        legend pos=north east,
        legend style={font=\small},
        yticklabel style={font=\small},
        xticklabel style={font=\small},
        width=0.55\columnwidth, height=4.75cm
      ]
        \addplot[red, thick]
          table[col sep=comma, x=t, y=f]{data/inexact_f10_u100_euclidean_errors_control.csv};
        
        \addplot[blue, thick]
          table[col sep=comma, x=t, y=f]{data/inexact_f10_u100_preconditioned_errors_control.csv};
        
      \end{axis}
    \end{tikzpicture}
  }
  \caption{Tracking of a curve that is not integral for $X$. Errors and controls plotted as functions of time.}
  \label{fig:inexact-10-100-integral-curve}
\end{figure}

\section{Conclusions}
We introduced an update for Riemannian metrics that makes certain directions of motion cheaper to move along.
After establishing some basic completeness properties of this reward metric, we discussed the change induced on the Levi--Civita connection of such a metric.
The tensor capturing the difference between the Levi--Civita connections for the original and modified metrics enabled the formulation of a geometric preconditioner for the proportional-derivative control law for a trajectory-tracking objective.
We showcased how the preconditioned strategy performs when compared to that of \cite{Lee} on a class of underactuated maze navigation problems on $\SE(3)$.
When the nominal trajectory was chosen to be an integral curve of a spatial vector field that navigates the maze, used to deform the metric, our methodology was found overall to better track said trajectory, both in terms of body position and velocity.
In the case where the desired trajectory was not an integral curve for the vector field used to deform the metric, our method was found to underperform around and in between stretches of the trajectory with higher curvature.
A stability analysis, as well as a numerical investigation of the effects of deforming the metric with a fully roto-translational vector field, are topics for future research.
Furthermore, we plan to study the convergence properties of the metrics $g_\lambda$ from \eqref{eq:g-lambda} and investigate similar metric-modification strategies for other problems in applied differential geometry.

\end{document}